\def \To{\longrightarrow}
\def \Hom{\operatorname{Hom}}
\def \Vec{\operatorname{Vec}}
\def \Ker{\operatorname{Ker}}
\def \H{\operatorname{H}}
\def \id{\operatorname{id}}
\def \op{\operatorname{op}}
\def \R{\mathcal{R}}
\def \k{\mathbbm{k}}
\def \o{\omega}
\numberwithin{equation}{section}
\newtheorem{theorem}{Theorem}[section]
\newtheorem{lemma}[theorem]{Lemma}
\newtheorem{proposition}[theorem]{Proposition}
\newtheorem{corollary}[theorem]{Corollary}
\begin{document}

\title{ON BRAIDED LINEAR GR-CATEGORIES$^\dag$}\thanks{\tiny $^\dag$Supported by PCSIRT IRT1264, SRFDP 20130131110001, NSFC 11371186 and SDNSF ZR2013AM022.}

\subjclass[2010]{18D10, 20J06} \keywords{braided monoidal category, group cohomology}

\author{Hua-Lin Huang}
\address{School of Mathematics, Shandong University, Jinan 250100, China} \email{hualin@sdu.edu.cn}

\author{Gongxiang Liu}
\address{Department of Mathematics, Nanjing University, Nanjing 210093, China} \email{gxliu@nju.edu.cn}

\author{Yu Ye}
\address{School of Mathematics, USTC, Hefei 230026, China} \email{yeyu@ustc.edu.cn}

\date{}
\maketitle

\begin{abstract}
We provide explicit and unified formulae for the normalized 3-cocycles on arbitrary finite abelian groups. As an application, we compute all the braided monoidal structures on linear Gr-categories.
\end{abstract}

\section{Introduction}
Throughout the paper, let $\k$ be an algebraically closed field of characteristic zero. By $\k^*$ we denote the multiplicative group $\k-\{0\}.$ Vector spaces, morphisms and categories considered herein are over $\k$ unless otherwise specified. For unexplained notions and facts, the reader is referred to \cite{am,Wei} for group cohomology and to \cite{js,kassel} for tensor categories.

Let $G$ be a group. By a linear Gr-category over $G$ we mean a tensor category $\Vec_G^\o$ consists of finite-dimensional vector spaces graded by $G$ with the usual tensor product and with associativity constraint given by a 3-cocycle $\o$ on $G.$ The notion``Gr-category" goes back to  Ho\`{a}ng Xu\^{a}n S\'{i}nh, a student of Grothendieck. In her thesis \cite{grc}, the monoidal structures of $\Vec_G$ (the category of $G$-graded spaces) were first related to the third cohomology group of $G.$ Gr-categories are a typical class of fusion categories and, in particular, any pointed fusion category has the form $\Vec_G^\o$ and so does the full subcategory of semi-simple objects of any finite pointed tensor category; see \cite{eno, eo}.

It is well known that the monoidal structures of $\Vec_G,$ up to tensor equivalence, are parameterized by the third cohomology group $\H^3(G,\k^*)$ and $\Vec_G^\o$ is braided only if $G$ is abelian and the braidings are given by quasi-bicharacters with respect to $\o;$ see, e.g., \cite{grc, js}. However, for the braided monoidal structures on $\Vec_G$ we need explicit and unified formulae of the normalized 3-cocycles on $G.$ Though the cohomology group of a finite abelian group may be known (see, for instance, \cite{am,Wei}), the formulae of normalized cocycles (even in low degrees) seem not available in the literature, except for several easy cases. The case of cyclic groups is given in \cite{js, ms}. The first result for non-cyclic group is obtained in \cite{bct} for the Klein group via very complicated computation of happy 3-cocycles which seems not applicable to more general groups. In \cite{bgrc1}, we are able to handle the case of the direct product of any two cyclic groups. The crux is to construct a chain map from the normalized bar resolution to the tensor product of the minimal resolutions of the cyclic factors of the considered group. It turns out that this idea can be extended to general finite abelian groups, only much more delicate constructions get involved.

Aside from the obvious importance in cohomology and representation theory of groups, for examples the normalized 2-cocycles in projective representations and Schur multipliers, the explicit and unified formulae of normalized 3-cocycles are indispensable in the classification program of finite pointed tensor categories and quasi-quantum groups which recently is under intensive study; see \cite{qha1,qha2,qha3} and related works. It is also fairly reasonable to expect that our results will be useful in the studies of twisted quantum doubles \cite{dcr}, finite group modular data \cite{cgr}, group-theoretical fusion categories \cite{eno}, and in the computation of Dijkgraaf-Witten invariants \cite{dw}. We hope to treat these subjects matter in future works.

The paper is organized as follows. In Section 2, for a given finite abelian group $G$ we consider the tensor resolution of the minimal resolutions of its cyclic factors and construct a chain map, up to the third term, from the normalized bar resolution to the tensor resolution. With a help of this chain map, in Section 3 we provide an explicit and unified formulae for the normalized 3-cocycles on $G$ and compute the monoidal structures on $\Vec_G.$ Finally in Section 4, we compute the braided structures on $\Vec_G^\o.$ Our results extend those very special cases obtained in \cite{js,bct,bgrc1} to the full generality.

\section{The tensor resolution and a chain map}
Let $G$ be a group and $(B_{\bullet},\partial_{\bullet})$ its normalized bar resolution. Applying $\Hom_{\mathbb{Z}G}(-,\k^{\ast})$ one gets a complex $(B^{\ast}_{\bullet},\partial^{\ast}_{\bullet}).$ Denote the group of normalized $n$-cocycles by $Z^n(G,\k^*),$ which is $\Ker \partial^*_n.$ In general, it is hard to determine $Z^n(G,\k^*)$ directly as the normalized bar resolution is over too huge. Our strategy of overcoming this is to get first a simpler resolution of $G$ whose cocycles are easy to compute and then construct a chain map from the bar resolution to it which will help to determine $Z^n(G,\k^*)$ eventially.

\subsection{The tensor resolution.} From now on let $G$ be a finite abelian group. Write $G\cong \mathbb{Z}_{m_{1}}\times\cdots \times\mathbb{Z}_{m_{n}}$ and for every $\mathbb{Z}_{m_{i}}$ fix a generator $g_{i}$ for $1\leq i\leq n.$ It is well known that the following periodic sequence is a free resolution of the
trivial $\mathbb{Z}_{m_{i}}$-module $\mathbb{Z}:$
\begin{equation}\cdots\longrightarrow \mathbb{Z}\mathbb{Z}_{m_{i}}\stackrel{T_{i}}\longrightarrow
\mathbb{Z}\mathbb{Z}_{m_{i}}\stackrel{N_{i}}\longrightarrow\mathbb{Z}\mathbb{Z}_{m_{i}}\stackrel{T_{i}}\longrightarrow
\mathbb{Z}\mathbb{Z}_{m_{i}}\stackrel{N_{i}}\longrightarrow
\mathbb{Z}\longrightarrow 0,\end{equation}
where $T_{i}=g_{i}-1$ and $N_{i}=\sum_{j=0}^{m_{i}-1}g_{i}^{j}$.

We construct the tensor product of the above periodic resolutions of the cyclic factors of $G.$ Specifically, let
$K_{\bullet}$ be the following complex of free $\mathbb{Z}G$-modules. For
each sequence  $a_{1},\ldots,a_{n}$ of nonnegative integers, let $\Phi(a_{1},\ldots,a_{n})$ be a free
generator in degree $a_{1}+\cdots+a_{n}$. Thus
$$K_{m}:=\bigoplus_{a_{1}+\cdots+a_{n}=m} (\mathbb{Z}G)\Phi(a_{1},\ldots,a_{n}).$$ Define
$$d_{i}(\Phi(a_{1},\ldots,a_{n}))=\left \{
\begin{array}{lll} 0 &\;\;\;\;a_{i}=0
\\ (-1)^{\sum_{l<i}a_{l}}N_{i}\Phi(a_{1},\ldots,a_{i}-1,\ldots,a_{n}) & \;\;\;\;0\neq a_{i}\;\textrm{even}
\\(-1)^{\sum_{l<i}a_{l}}T_{i}\Phi(a_{1},\ldots,a_{i}-1,\ldots,a_{n}) &
\;\;\;\;0\neq a_{i}\;\textrm{odd}
\end{array} \right.$$
for $1\leq i\leq n$ and the differential $d$ is set to be $d_{1}+\cdots +d_{n}$.

\begin{lemma}
$(K_{\bullet},d_{\bullet})$ is a free resolution of the trivial $\mathbb{Z}G$-module $\mathbb{Z}$.
\end{lemma}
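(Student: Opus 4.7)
The plan is to identify $(K_\bullet, d_\bullet)$ with the total complex of the $\mathbb{Z}$-linear tensor product $P^{(1)}_\bullet \otimes_{\mathbb{Z}} \cdots \otimes_{\mathbb{Z}} P^{(n)}_\bullet$, where $P^{(i)}_\bullet$ denotes the periodic resolution $(2.1)$ of $\mathbb{Z}$ as a $\mathbb{Z}\mathbb{Z}_{m_i}$-module. Under this identification, the generator $\Phi(a_1,\ldots,a_n)$ corresponds to $1\otimes\cdots\otimes 1$ in multi-degree $(a_1,\ldots,a_n)$; the internal differential of the $i$-th tensor factor alternates between $N_i$ at even positive positions and $T_i$ at odd positions, matching exactly the piecewise definition of $d_i$; and the prefactor $(-1)^{\sum_{l<i}a_l}$ is precisely the Koszul sign produced by totalizing a multi-indexed tensor product of complexes.

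Granted this identification, the lemma breaks into three essentially independent pieces. Freeness of each $K_m$ as a $\mathbb{Z}G$-module is immediate from the canonical ring isomorphism $\mathbb{Z}G \cong \mathbb{Z}\mathbb{Z}_{m_1}\otimes_{\mathbb{Z}}\cdots\otimes_{\mathbb{Z}} \mathbb{Z}\mathbb{Z}_{m_n}$ together with the fact that each $P^{(i)}_{a_i}=\mathbb{Z}\mathbb{Z}_{m_i}$ is free of rank one over itself. The relation $d^2=0$ then separates into the identities $d_i^2=0$ for each $i$, which reduce to $T_iN_i=N_iT_i=0$ inside $\mathbb{Z}\mathbb{Z}_{m_i}$, together with the anticommutation $d_id_j+d_jd_i=0$ for $i\neq j$, which is a routine sign check against the Koszul convention.

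The substantive point is exactness in positive degrees together with $H_0(K_\bullet)\cong\mathbb{Z}$, and here I would invoke the Künneth theorem. Since $\mathbb{Z}\mathbb{Z}_{m_i}$ is free as a $\mathbb{Z}$-module, each $P^{(i)}_\bullet$ is a complex of flat $\mathbb{Z}$-modules whose homology is $\mathbb{Z}$ concentrated in degree $0$, and this homology is itself $\mathbb{Z}$-flat. Iterating Künneth across the $n$ factors therefore eliminates all Tor contributions and yields
$$H_m(K_\bullet)\;\cong\;\bigoplus_{a_1+\cdots+a_n=m}H_{a_1}(P^{(1)})\otimes_{\mathbb{Z}}\cdots\otimes_{\mathbb{Z}}H_{a_n}(P^{(n)}),$$
which vanishes unless $m=0$, in which case it equals $\mathbb{Z}$. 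The main delicacy I anticipate is not homological but notational: one must carefully verify that the piecewise-defined $d_i$, with its alternation between $N_i$ and $T_i$ and its prefactor $(-1)^{\sum_{l<i}a_l}$, literally reproduces the totalized differential of the tensor product complex. Once that identification is firmly nailed down, the rest of the lemma is a citation of Künneth.
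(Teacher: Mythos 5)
Your proposal is correct and follows exactly the paper's route: the paper's proof consists of observing that $(K_\bullet,d_\bullet)$ is the tensor product complex of the periodic resolutions (2.1) and citing the K\"unneth formula for complexes \cite[Theorem 3.6.3]{Wei}. You merely spell out the details the paper leaves implicit (the Koszul sign identification, freeness of each $K_m$, and the flatness hypotheses that kill the Tor terms), all of which are accurate.
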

\begin{proof} By observing that $(K_{\bullet},d_{\bullet})$ is exactly the tensor product complex of (2.1), the lemma follows by the K\"unneth formula for complexes; see \cite[Theorem 3.6.3]{Wei}.
\end{proof}

For the convenience of the exposition, we fix some notations before moving on. For any $1\leq i\leq n$, define $\Phi_{i}:=\Phi(0,\ldots,1,\ldots,0)$ where $1$ lies in the $i$-th position.
For any $1\leq i\leq j\leq n$, define $\Phi_{i,j}:=\Phi(0,\ldots,1,\ldots,1,\ldots,0)$ where $1$ lies in the $i$-th and $j$-th positions if $i<j$ and $\Phi_{i,i}:=\Phi(0,\ldots,2,\ldots,0)$ where $2$ lies in the $i$-th position. Similarly, we define $\Phi_{i,j,k},\Phi_{i,j,j},\Phi_{i,i,j}$ and
 $\Phi_{i,i,i}$ for $1\leq i\leq n$, $1\leq i<j\leq n$ and $1\leq i<j<k\leq n$ and define $\Phi_{i,j,s,t},\Phi_{i,i,j,s},\Phi_{i,j,s,s},\Phi_{i,j,j,s},\Phi_{i,i,j,j},\Phi_{i,i,i,j},\Phi_{i,j,j,j}$
 and $\Phi_{i,i,i,i}$ for $1\leq i\leq n$, $1\leq i<j\leq n$, $1\leq i<j<s\leq n$ and $1\leq i<j<s<t\leq n$.

It is clear that any cochain $f\in \Hom_{\mathbb{Z}G}(K_{3},\k^{\ast})$ is uniquely determined by its values on $\Phi_{i,j,k},\Phi_{i,j,j},\Phi_{i,i,j}$ and $\Phi_{i,i,i}$ for $1\leq i\leq n$, $1\leq i<j\leq n$ and $1\leq i<j<k\leq n$. Write $f_{i,j,k}=f(\Phi_{i,j,k}), \ f_{i,j,j}=f(\Phi_{i,j,j}), \ f_{i,i,j}=f(\Phi_{i,i,j})$ and $f_{i,i,i}=f(\Phi_{i,i,i})$.

\begin{lemma} The $3$-cochain $f\in \Hom_{\mathbb{Z}G}(K_{3},\k^{\ast})$ is a cocycle if and only if
\begin{equation} f_{i,i,i}^{m_{i}}=1,\;\;f_{i,j,j}^{m_{i}}f_{i,i,j}^{m_{j}}=1,\;\;f_{i,j,k}^{m_{i}}=f_{i,j,k}^{m_{j}}=f_{i,j,k}^{m_{k}}=1
\end{equation}
for all $1\leq i\leq n$, $1\leq i<j\leq n$ and $1\leq i<j<k\leq n.$
\end{lemma}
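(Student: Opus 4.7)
The plan is to verify the cocycle condition $(d^{\ast}f)(\Psi)=f(d\Psi)=1$ on every free $\mathbb{Z}G$-generator $\Psi$ of $K_{4}$, and then read off the resulting equations. The generators of $K_{4}$ are the $\Phi(a_{1},\ldots,a_{n})$ with $\sum a_{i}=4$; grouped by their support these are the eight families $\Phi_{i,i,i,i}$, $\Phi_{i,i,i,j}$, $\Phi_{i,j,j,j}$, $\Phi_{i,i,j,j}$, $\Phi_{i,i,j,s}$, $\Phi_{i,j,j,s}$, $\Phi_{i,j,s,s}$ and $\Phi_{i,j,s,t}$ already listed in the excerpt. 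The central observation that collapses the calculation is that $\k^{\ast}$ carries the trivial $\mathbb{Z}G$-action, so for every generator $\Phi$ of $K_{3}$,
\[
f(T_{i}\Phi)=f((g_{i}-1)\Phi)=f(\Phi)\,f(\Phi)^{-1}=1,
\qquad
f(N_{i}\Phi)=\prod_{r=0}^{m_{i}-1}f(g_{i}^{r}\Phi)=f(\Phi)^{m_{i}}.
\]
Consequently, in $f(d\Psi)=\prod_{i}f(d_{i}\Psi)$ only those summands survive for which $a_{i}$ is nonzero and even, i.e.\ the $N$-type contributions.

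Using the formula for $d=d_{1}+\cdots+d_{n}$ and keeping track of the signs $(-1)^{\sum_{l<i}a_{l}}$, I would then work through the eight cases and extract the surviving $N$-type terms. The cases $\Phi_{i,i,i,j},\Phi_{i,j,j,j},\Phi_{i,j,s,t}$ contain no even $a_{l}$ at all, so they give the trivial identity $1=1$. The case $\Phi_{i,i,i,i}$ contributes only through $d_{i}$ and yields $f_{i,i,i}^{m_{i}}=1$. The case $\Phi_{i,i,j,j}$ contributes through both $d_{i}$ and $d_{j}$; keeping track of the sign $(-1)^{a_{i}}=(-1)^{2}=1$ in front of $d_{j}$, I obtain $f_{i,j,j}^{m_{i}}\,f_{i,i,j}^{m_{j}}=1$. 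Finally, for each of $\Phi_{i,i,j,s}$, $\Phi_{i,j,j,s}$, $\Phi_{i,j,s,s}$ exactly one of $a_{i},a_{j},a_{s}$ equals $2$, so the only surviving contribution is a single $N$-term and (the possible overall sign being absorbed by inversion in $\k^{\ast}$) one reads off $f_{i,j,s}^{m_{i}}=1$, $f_{i,j,s}^{m_{j}}=1$ and $f_{i,j,s}^{m_{s}}=1$ respectively.

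Collecting all the nontrivial identities produced in this enumeration gives exactly the list (2.2), proving the ``only if'' direction. The converse is then automatic: if (2.2) holds then each of the expressions computed above equals $1$, so $(d^{\ast}f)(\Psi)=1$ for every generator $\Psi$ of $K_{4}$, whence $f\in Z^{3}(G,\k^{\ast})$.

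I do not anticipate any conceptual obstacle: the whole argument is a direct unwinding of the differential $d$ on $K_{4}$ together with the two elementary identities $f\circ T_{i}=1$ and $f\circ N_{i}=(\,\cdot\,)^{m_{i}}$. The only thing to be careful about is bookkeeping -- namely, not dropping any of the eight generator types and correctly computing the sign $(-1)^{\sum_{l<i}a_{l}}$ for each $d_{i}$-contribution, since in the $N$-type cases this sign matters for whether the exponent is $+m_{i}$ or $-m_{i}$ (though ultimately this distinction is immaterial for the equation "$=1$").
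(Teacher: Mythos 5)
Your proposal is correct and follows essentially the same route as the paper: evaluate $d^{\ast}f$ on the degree-four generators, use that $T_{l}$-contributions are trivial (trivial $G$-action on $\k^{\ast}$) while $N_{l}$-contributions raise to the $m_{l}$-th power, and read off the relations case by case, with the sign bookkeeping ($f_{i,j,s}^{-m_{j}}=1$ versus $f_{i,j,s}^{m_{j}}=1$) handled the same way the paper does.
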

\begin{proof}
By definition, $f$ is a $3$-cocycle if and only if $1=d^{\ast}(f)(\Phi_{i,j,s,t})=f(d(\Phi_{i,j,s,t}))$ for all  $1\leq i\leq j\leq s\leq t\leq n.$ For any $a\in \k^{\ast}$, clearly $T_{i}\cdot a=1$ since $\k^{\ast}$ is a trivial $G$-module. This implies that we only need to consider the condition $1=d^{\ast}(f)(\Phi_{i,j,s,t})$ in case $i=j=s=t$, $i=j<s<t$, $i<j=s<t$, $i<j<s=t$ and $i=j<s=t$. In case $i=j=s=t$, we have $1=d^{\ast}(f)(\Phi_{i,i,i,i})=f(N_{i}\Phi_{i,i,i})=N_{i}\cdot f_{i,i,i}=f_{i,i,i}^{m_{i}}$.
Similarly, in case  $i=j<s<t$, we have $f_{i,s,t}^{m_{i}}=1$. In case $i<j=s<t$, we have $f_{i,j,t}^{-m_{j}}=1$. In case $i<j<s=t$, we have $f_{i,j,s}^{m_{s}}=1$. And in case $i=j<s=t$, we have
$f_{i,s,s}^{m_{i}}f_{i,i,s}^{m_{s}}=1$.
\end{proof}

\begin{lemma}The $3$-cochain $f \in \Hom_{\mathbb{Z}G}(K_{3},\k^{\ast})$ is a coboundary if and only if for all $1\leq i<j\leq n$, there are $g_{i,j}\in \k^{\ast}$ such that
\begin{equation} f_{l,l,l}=1,\;\;f_{i,i,j}=g_{i,j}^{m_{i}},\;\;f_{i,j,j}=g_{i,j}^{-m_{j}},\;\;f_{r,s,t}=1
\end{equation} for $1\leq l\leq n$ and $1\leq r<s<t\leq n$.
\end{lemma}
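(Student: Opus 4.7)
The plan is to compute $d^*(g)$ for a general 2-cochain $g \in \Hom_{\mathbb{Z}G}(K_2,\k^*)$ and read off the precise constraints that force $f = d^*(g)$. A 2-cochain $g$ is determined by its values $g_{i,j} := g(\Phi_{i,j})$ for $1\leq i\leq j\leq n$, so the task reduces to evaluating $d^*(g)$ on the four types of generators of $K_3$: $\Phi_{i,i,i}$, $\Phi_{i,i,j}$, $\Phi_{i,j,j}$ (with $i<j$), and $\Phi_{i,j,k}$ (with $i<j<k$).

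First I would apply the formula for $d = d_1 + \cdots + d_n$ to each of these generators, tracking (a) which summands vanish (those with $a_l = 0$), (b) whether the nonvanishing ones carry a factor $N_l$ (if $a_l$ is even) or $T_l$ (if $a_l$ is odd), and (c) the sign $(-1)^{\sum_{l<i}a_l}$. The key simplification is that $\k^*$ is a trivial $G$-module, so $T_l \cdot a = 1$ and $N_l \cdot a = a^{m_l}$ for every $a\in\k^*$. Hence, in the evaluation of $g$ on $d(\Phi_{r,s,t})$, every $T_l$-summand contributes trivially and only the $N_l$-summands survive.

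A direct computation then yields
\begin{align*}
d^*(g)(\Phi_{i,i,i}) &= 1, \\
d^*(g)(\Phi_{i,i,j}) &= g_{i,j}^{m_i}, \\
d^*(g)(\Phi_{i,j,j}) &= g_{i,j}^{-m_j}, \\
d^*(g)(\Phi_{i,j,k}) &= 1,
\end{align*}
which are precisely the identities (2.3). This settles both directions simultaneously. For the ``only if'' part, if $f = d^*(g)$, set $g_{i,j} := g(\Phi_{i,j})$ and read off the equations. For the ``if'' part, given scalars $g_{i,j}$ ($i<j$) satisfying (2.3), define a 2-cochain $g$ by $g(\Phi_{i,i}) := 1$ and $g(\Phi_{i,j}) := g_{i,j}$ for $i<j$, and the same calculation verifies $d^*(g) = f$.

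I do not expect a genuine obstacle here; the argument is essentially a bookkeeping exercise tracking signs and the $T_l$ versus $N_l$ dichotomy. The only subtle point is the sign in $d_j(\Phi_{i,j,j}) = -N_j\Phi_{i,j}$, coming from $(-1)^{a_i} = -1$, which is precisely what produces the negative exponent $-m_j$ in the formula for $f_{i,j,j}$ and accounts for the asymmetry between $f_{i,i,j}$ and $f_{i,j,j}$ in (2.3).
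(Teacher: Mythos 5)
Your proposal is correct and follows essentially the same route as the paper: evaluate $d^*(g)$ on the four types of degree-3 generators, using that $T_l$ acts trivially on $\k^*$ while $N_l$ raises to the $m_l$-th power, which gives exactly the relations (2.3); the sign analysis producing $-N_j\Phi_{i,j}$ in $d(\Phi_{i,j,j})$ is also as in the paper. Your explicit construction of $g$ for the converse direction just makes precise what the paper leaves implicit, so there is nothing to correct.
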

\begin{proof} By definition, $f$ is a coboundary if and only if $f=d^{\ast}(g)$ for some $2$-cochain $g\in \Hom_{\mathbb{Z}G}(K_{2},\k^{\ast})$. For any $1\leq i\leq j\leq n$,
let $g_{i,j}:=g(\Phi_{i,j})$. As before, we have $T_{l}\cdot a=1$ for any $a\in \k^{\ast},$ and $d^{\ast}(g)(\Phi_{r,s,t})=d^{\ast}(g)(\Phi_{l,l,l})=1$ for $1\leq r<s<t\leq n$ and
$1\leq l\leq n.$ While for all $1\leq i<j\leq n,$ \ $f_{i,i,j}=d^{\ast}(g)(\Phi_{i,i,j})=g(N_{i}\Phi_{i,j}+T_{j}\Phi_{i,i})=g_{i,j}^{m_{i}}$ and $f_{i,j,j}=d^{\ast}(g)(\Phi_{i,j,j})=g(T_{i}\Phi_{j,j}-N_{j}\Phi_{i,j})=g_{i,j}^{-m_{j}}$.
\end{proof}

For a set of natural numbers $s_{1},\ldots,s_{t},$ by $(s_{1},\ldots,s_{t})$ we denote their greatest common divisor.

\begin{proposition}
$$\H^{3}(G,\k^{\ast})\cong \prod_{i=1}^{n}\mathbb{Z}_{m_{i}}\times \prod_{1\leq i<j\leq n}^{n}\mathbb{Z}_{(m_{i},m_{j})}\times \prod_{1\leq i<j<k\leq n}^{n}\mathbb{Z}_{(m_{i},m_{j},m_{k})}.$$
\end{proposition}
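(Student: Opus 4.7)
The plan is to combine Lemmas 2.2 and 2.3 and compute the quotient $Z^3/B^3$ generator by generator. Since $K_{\bullet}$ is a free resolution of $\mathbb{Z}$ as a $\mathbb{Z}G$-module, the cohomology of $\Hom_{\mathbb{Z}G}(K_{\bullet},\k^*)$ computes $\H^3(G,\k^*)$. Because the free generators of $K_3$ split into four disjoint families — $\Phi_{i,i,i}$, $\Phi_{i,j,k}$ with $i<j<k$, and the pairs $(\Phi_{i,i,j},\Phi_{i,j,j})$ with $i<j$ — and both the cocycle constraints (2.2) and the coboundary formulas (2.3) respect this splitting, it suffices to analyze each family independently and then take the direct product.

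First I would handle the easy families. For $\Phi_{i,i,i}$, the cocycle condition is $f_{i,i,i}^{m_i}=1$ while the coboundary condition forces $f_{i,i,i}=1$; the quotient is $\mu_{m_i}\cong\mathbb{Z}_{m_i}$. For $\Phi_{i,j,k}$ with $i<j<k$, the three conditions $f_{i,j,k}^{m_i}=f_{i,j,k}^{m_j}=f_{i,j,k}^{m_k}=1$ are together equivalent to $f_{i,j,k}^{(m_i,m_j,m_k)}=1$ (since the gcd is an integer combination), and coboundaries force $f_{i,j,k}=1$; the quotient is $\mu_{(m_i,m_j,m_k)}\cong\mathbb{Z}_{(m_i,m_j,m_k)}$.

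The only nontrivial part, and the main obstacle, is the pair $(a,b):=(f_{i,i,j},f_{i,j,j})$, subject to the cocycle relation $a^{m_j}b^{m_i}=1$ modulo the coboundary image $\{(g^{m_i},g^{-m_j}):g\in\k^*\}$. Here I would use that $\k^*$ is divisible: given any cocycle pair $(a,b)$, solve $b=g^{-m_j}$ for some $g\in\k^*$ and replace $(a,b)$ by the equivalent representative $(a',1)=(ag^{-m_i},1)$. The cocycle relation then reduces to $a'^{m_j}=1$, so $a'\in\mu_{m_j}$. Two such representatives $(a',1)$ and $(a'',1)$ are equivalent iff $a''/a'\in\{g^{m_i}:g^{m_j}=1\}$, and this latter group is exactly $\mu_{m_j/(m_i,m_j)}$. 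The quotient is therefore
\[
\mu_{m_j}/\mu_{m_j/(m_i,m_j)}\;\cong\;\mu_{(m_i,m_j)}\;\cong\;\mathbb{Z}_{(m_i,m_j)}.
\]

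Assembling the three independent contributions over $1\le i\le n$, $1\le i<j\le n$, and $1\le i<j<k\le n$ yields the stated product decomposition of $\H^3(G,\k^*)$. The crucial ingredient throughout the third step is the divisibility of $\k^*$; without it one would only get a short exact sequence rather than a clean product, and the whole argument would have to be replaced by a Künneth-type computation.
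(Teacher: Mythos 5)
Your proposal is correct and takes essentially the same route as the paper: both combine Lemmas 2.2 and 2.3, split the computation over the generator families of $K_3$, use divisibility of $\k^*$ to normalize $f_{i,j,j}=1$, and identify the residual quotient $\mu_{m_j}/\mu_{m_j/(m_i,m_j)}\cong\mathbb{Z}_{(m_i,m_j)}$ for each pair $i<j$. If anything, your handling of the pair $(f_{i,i,j},f_{i,j,j})$ is slightly more explicit than the paper's terse final step, but the underlying argument is the same.
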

\begin{proof} By Lemmas 2.2 and 2.3, for a $3$-cocycle $f$ one can assume that $f_{l,l,l}$ is an $m_{l}$-th root of unity and $f_{i,j,k}$ is an $(m_{i},m_{j},m_{k})$-th root of unity for all $1\leq l\leq n$ and $1\leq i<j<k\leq n$. By Lemma 2.3, one can take $g_{i,j}=f_{i,j,j}^{-\frac{1}{m_{j}}}$ and thus one can assume that $f_{i,j,j}=1$
and $g_{i,j}^{m_{j}}=1$ for all $1\leq i<j \leq n$. By $f_{i,j,j}^{m_{i}}f_{i,i,j}^{m_{j}}=1$, one has $f_{i,i,j}^{m_{j}}=1$. Therefore, $\H^{3}(G,\k^{\ast})$ must be a quotient group of $\prod_{i=1}^{n}\mathbb{Z}_{m_{i}}\times \prod_{1\leq i<j\leq n}^{n}\mathbb{Z}_{m_{j}}\times \prod_{1\leq i<j<k\leq n}^{n}\mathbb{Z}_{(m_{i},m_{j},m_{k})}.$ Using the second relation in (2.3), one may even assume that $f_{i,i,j}^{m_{i}}=1$. So the proposition is proved.
\end{proof}

For any natural number $m,$ once and for all we fix $\zeta_{m}$ to be a primitive $m$-th root of unity.

\begin{corollary}
The following \[ \left\{ f\in \Hom_{\mathbb{Z}G}(K_{3},\k^{\ast})
\left|
  \begin{array}{ll}
    f_{l,l,l}=\zeta_{m_{l}}^{a_{l}},f_{i,i,j}=\zeta_{m_{j}}^{a_{ij}}, \ f_{i,j,j}=1, \
    f_{r,s,t}=\zeta_{(m_{r},m_{s},m_{t})}^{a_{rst}}\\
   \emph{for} \ 1\leq l\leq n, \ 1\leq i<j\leq n, \ 1\leq r<s<t\leq n, \ \emph{and} \\
    0 \leq a_{l}<m_{l}, \ 0\leq a_{ij}<(m_{i},m_{j}), \ 0\leq a_{rst}<(m_{r},m_{s},m_{t})
  \end{array}
\right. \right\} \]
makes a complete set of representatives of $3$-cocycles of the complex $(K_{\bullet}^*,d_{\bullet}^*).$
\end{corollary}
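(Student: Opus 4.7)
The plan combines Lemmas 2.2 and 2.3 with Proposition 2.4 and proceeds in three steps: (a) verify that every element of the displayed set is a $3$-cocycle; (b) show that every cohomology class in $\H^3(G,\k^*)$ admits a representative of the displayed form; (c) deduce that distinct tuples $(a_l,a_{ij},a_{rst})$ yield non-cohomologous cocycles.

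For step (a) I would simply plug into Lemma 2.2. The relation $f_{l,l,l}^{m_l}=1$ becomes $\zeta_{m_l}^{a_l m_l}=1$; for $i<j$, the choice $f_{i,j,j}=1$ reduces $f_{i,j,j}^{m_i}f_{i,i,j}^{m_j}=1$ to $\zeta_{m_j}^{a_{ij}m_j}=1$; and for $r<s<t$ the element $\zeta_{(m_r,m_s,m_t)}^{a_{rst}}$ is annihilated by each of $m_r,m_s,m_t$ because $(m_r,m_s,m_t)$ divides all three.

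For step (b) I would follow the normalization procedure already sketched in the proof of Proposition 2.4, but make the residual coboundary freedom explicit. Given any 3-cocycle $f$, for each pair $i<j$ pick $g_{i,j}\in \k^*$ with $g_{i,j}^{m_j}=f_{i,j,j}$; this is possible since $\k$ is algebraically closed. Multiplying $f$ by the coboundary of Lemma 2.3 built from $\{g_{i,j}\}$ produces a cohomologous cocycle with $f_{i,j,j}=1$, while modifying $f_{i,i,j}$ by the factor $g_{i,j}^{m_i}$. After this normalization the only coboundaries that still preserve $f_{i,j,j}=1$ are, by Lemma 2.3, those arising from $g_{i,j}$ with $g_{i,j}^{m_j}=1$, i.e.\ $g_{i,j}=\zeta_{m_j}^{c}$ for some $c\in\Z$. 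Such a choice shifts the exponent of $f_{i,i,j}$ (as a power of $\zeta_{m_j}$) by $cm_i$, and as $c$ varies over $\Z$ these shifts form the subgroup $(m_i,m_j)\Z/m_j\Z$ of $\Z/m_j\Z$. Hence $f_{i,i,j}$ can be uniquely normalized to $\zeta_{m_j}^{a_{ij}}$ with $0\le a_{ij}<(m_i,m_j)$. Meanwhile, Lemma 2.3 shows that $f_{l,l,l}$ and $f_{r,s,t}$ for $r<s<t$ are unchanged by any coboundary, and by Lemma 2.2 their orders divide $m_l$ and $(m_r,m_s,m_t)$ respectively, so they admit the stated normal forms.

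Step (c) then follows by a cardinality comparison: the displayed set has size $\prod_l m_l\cdot\prod_{i<j}(m_i,m_j)\cdot\prod_{r<s<t}(m_r,m_s,m_t)$, which coincides with $|\H^3(G,\k^*)|$ by Proposition 2.4, so the surjection onto $\H^3(G,\k^*)$ provided by step (b) is forced to be a bijection. The main subtlety I anticipate lies in step (b): obtaining the tighter modulus $(m_i,m_j)$ for $a_{ij}$, rather than the naive $m_j$ coming only from the cocycle condition, requires carefully isolating the subgroup of coboundaries compatible with the prior normalization $f_{i,j,j}=1$.
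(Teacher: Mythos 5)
Your proposal is correct and follows the paper's intended route: the corollary is an immediate byproduct of Lemmas 2.2 and 2.3 together with the normalization carried out in the proof of Proposition 2.4, and your explicit analysis of the residual coboundary freedom (that $g_{i,j}^{m_j}=1$ shifts the exponent of $f_{i,i,j}$ exactly by the subgroup $(m_i,m_j)\Z/m_j\Z$) is precisely the detail the paper leaves implicit. Both your direct uniqueness argument in step (b) and the counting argument in step (c) are valid ways to finish.
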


\subsection{A chain map.}
We need some more notations to present the chain map. For any positive integers $s$ and $t,$ let $[\frac{s}{t}]$ denote the integer part of $\frac{s}{t}$ and let $s_t'$ denote the remainder of division of $s$ by $t.$ When there is no risk of confusion, we drop the subscript and write simply $s'.$ The following
observation is useful in later arguments.

\begin{lemma} For any three natural numbers $s,t,r$, one has
\begin{equation}[\frac{s+t_r'}{r}]=[\frac{s+t}{r}]-[\frac{t}{r}].
\end{equation}
\end{lemma}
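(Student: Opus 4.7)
The plan is to reduce the identity to the elementary fact that adding an integer commutes with taking the integer part, namely $[x+n]=[x]+n$ for any real $x$ and any integer $n$. The first step is to invoke the definition of $t_r'$ recalled just before the lemma: by the division algorithm one has
$$t = \left[\frac{t}{r}\right] r + t_r', \qquad 0 \le t_r' < r.$$

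With this decomposition in hand, I would add $s$ to both sides and divide by $r$ to obtain
$$\frac{s+t}{r} = \frac{s+t_r'}{r} + \left[\frac{t}{r}\right].$$
Applying $[\,\cdot\,]$ to both sides and pulling the integer $[t/r]$ outside via the commutation property above gives $[(s+t)/r] = [(s+t_r')/r] + [t/r]$, which rearranges to the claimed identity. There is no genuine obstacle here; the lemma is a one-line consequence of the division algorithm. The only minor pitfall to avoid is splitting $[(s+t)/r]$ term by term, since neither $s/r$ nor $t/r$ is an integer in general; the argument above sidesteps this by grouping $s$ with the remainder $t_r'$, so that only the genuine integer $[t/r]$ is ever moved outside the integer-part brackets.
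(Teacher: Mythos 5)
Your proof is correct and is essentially the paper's own argument: the paper's one-line computation $[\frac{s+t_r'}{r}]=[\frac{s+t-[\frac{t}{r}]r}{r}]=[\frac{s+t}{r}]-[\frac{t}{r}]$ is exactly your substitution $t_r'=t-[\frac{t}{r}]r$ followed by pulling the integer $[\frac{t}{r}]$ out of the integer part. No differences worth noting.
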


\begin{proof}
$ [\frac{s+t_r'}{r}]=[\frac{s+t-[\frac{t}{r}]r}{r}]=[\frac{s+t}{r}]-[\frac{t}{r}]. $
\end{proof}

Now we are ready to give a chain map, up to the third term for our purpose, from the normalized bar resolution $(B_{\bullet},\partial_{\bullet})$ to the tensor resolution $(K_{\bullet},d_{\bullet}).$ Recall that $B_{m}$ is the free $\mathbb{Z}G$-module on the set of all symbols
$[h_{1},\ldots,h_{m}]$ with $h_{i}\in G$ and $m\geq 1$. In case $m=0$, the symbol $[\; ]$ denote $1\in \mathbb{Z}G$ and the map $\partial_{0}=\epsilon:\;B_{0}\to \mathbb{Z}$ sends
$[\; ]$ to $1$.

We define the following three  morphisms of $\mathbb{Z}G$-modules:
\begin{eqnarray*}
F_{1}: &&B_{1}\To K_{1}\\
&&[g_{1}^{i_{1}}\cdots g_{n}^{i_{n}}]\mapsto
\sum_{s=1}^{n}\sum_{\alpha_{s}=0}^{i_{s}-1}g_{1}^{i_{1}}\cdots g_{s-1}^{i_{s-1}}g_{s}^{\alpha_{s}}\Phi_{s};\\
F_{2}: &&B_{2}\To K_{2}\\
&&[g_{1}^{i_{1}}\cdots g_{n}^{i_{n}},g_{1}^{j_{1}}\cdots g_{n}^{j_{n}}]\mapsto
\sum_{s=1}^{n}g_{1}^{i_{1}+j_{1}}\cdots g_{s-1}^{i_{s-1}+j_{s-1}}[\frac{i_{s}+j_{s}}{m_{s}}]\Phi_{s,s}\\
&&-\sum_{1\leq s<t\leq n}\sum_{\alpha_{s}=0}^{j_{s}-1}\sum_{\beta_{t}=0}^{i_{t}-1}g_{1}^{i_{1}}\cdots g_{t-1}^{i_{t-1}}g_{1}^{j_{1}}\cdots g_{s-1}^{j_{s-1}}g_{s}^{\alpha_{s}}g_{t}^{\beta_{t}}\Phi_{s,t};\\
F_{3}: &&B_{3}\To K_{3}\\
&&[g_{1}^{i_{1}}\cdots g_{n}^{i_{n}},g_{1}^{j_{1}}\cdots g_{n}^{j_{n}},g_{1}^{k_{1}}\cdots g_{n}^{k_{n}}]\mapsto\\
&& \sum_{r=1}^{n}[\frac{j_{r}+k_{r}}{m_{r}}]g_{1}^{j_{1}+k_{1}}\cdots g_{r-1}^{j_{r-1}+k_{r-1}}\sum_{\beta_{r}=0}^{i_{r}-1}g_{1}^{i_{1}}\cdots g_{r-1}^{i_{r-1}}g_{r}^{\beta_{r}}\Phi_{r,r,r}+\\
&&\sum_{1\leq r<t\leq n}[\frac{j_{r}+k_{r}}{m_{r}}]g_{1}^{j_{1}+k_{1}}\cdots g_{r-1}^{j_{r-1}+k_{r-1}}
\sum_{\beta_{t}=0}^{i_{t}-1}g_{1}^{i_{1}}\cdots g_{t-1}^{i_{t-1}}g_{t}^{\beta_{t}}\Phi_{r,r,t}+\\
&&\sum_{1\leq r<t\leq n}[\frac{i_{t}+j_{t}}{m_{t}}]g_{1}^{i_{1}+j_{1}}\cdots g_{t-1}^{i_{t-1}+j_{t-1}}
\sum_{\gamma_{r}=0}^{k_{r}-1}g_{1}^{k_{1}}\cdots g_{r-1}^{k_{r-1}}g_{r}^{\gamma_{r}}\Phi_{r,t,t}-\\
&&\sum_{1\leq r<s<t\leq n}\sum_{\beta_{t}=0}^{i_{t}-1}g_{1}^{i_{1}}\cdots g_{t-1}^{i_{t-1}}g_{t}^{\beta_{t}}
\sum_{\alpha_{s}=0}^{j_{s}-1}g_{1}^{j_{1}}\cdots g_{s-1}^{j_{s-1}}g_{s}^{\alpha_{s}}
\sum_{\gamma_{r}=0}^{k_{r}-1}g_{1}^{k_{1}}\cdots g_{r-1}^{k_{r-1}}g_{r}^{\gamma_{r}}\Phi_{r,s,t}
\end{eqnarray*}
for $0\leq i_{l},j_{l},k_{l}< m_{l}$ and $1\leq l\leq n$.

\begin{proposition} The following diagram is commutative

\begin{figure}[hbt]
\begin{picture}(150,50)(50,-40)
\put(0,0){\makebox(0,0){$ \cdots$}}\put(10,0){\vector(1,0){20}}\put(40,0){\makebox(0,0){$B_{3}$}}
\put(50,0){\vector(1,0){20}}\put(80,0){\makebox(0,0){$B_{2}$}}
\put(90,0){\vector(1,0){20}}\put(120,0){\makebox(0,0){$B_{1}$}}
\put(130,0){\vector(1,0){20}}\put(160,0){\makebox(0,0){$B_{0}$}}
\put(170,0){\vector(1,0){20}}\put(200,0){\makebox(0,0){$\mathbb{Z}$}}
\put(210,0){\vector(1,0){20}}\put(240,0){\makebox(0,0){$0$}}

\put(0,-40){\makebox(0,0){$ \cdots$}}\put(10,-40){\vector(1,0){20}}\put(40,-40){\makebox(0,0){$K_{3}$}}
\put(50,-40){\vector(1,0){20}}\put(80,-40){\makebox(0,0){$K_{2}$}}
\put(90,-40){\vector(1,0){20}}\put(120,-40){\makebox(0,0){$K_{1}$}}
\put(130,-40){\vector(1,0){20}}\put(160,-40){\makebox(0,0){$K_{0}$}}
\put(170,-40){\vector(1,0){20}}\put(200,-40){\makebox(0,0){$\mathbb{Z}$}}
\put(210,-40){\vector(1,0){20}}\put(240,-40){\makebox(0,0){$0$}}

\put(40,-10){\vector(0,-1){20}}
\put(80,-10){\vector(0,-1){20}}
\put(120,-10){\vector(0,-1){20}}
\put(158,-10){\line(0,-1){20}}\put(160,-10){\line(0,-1){20}}
\put(198,-10){\line(0,-1){20}}\put(200,-10){\line(0,-1){20}}

\put(60,5){\makebox(0,0){$\partial_{3}$}}
\put(100,5){\makebox(0,0){$\partial_{2}$}}
\put(140,5){\makebox(0,0){$\partial_{1}$}}

\put(60,-35){\makebox(0,0){$d$}}
\put(100,-35){\makebox(0,0){$d$}}
\put(140,-35){\makebox(0,0){$d$}}

\put(50,-20){\makebox(0,0){$F_{3}$}}
\put(90,-20){\makebox(0,0){$F_{2}$}}
\put(130,-20){\makebox(0,0){$F_{1}$}}

\end{picture}
\end{figure}
\end{proposition}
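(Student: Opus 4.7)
The plan is to verify the three chain-map identities $dF_m = F_{m-1}\partial_m$ for $m=1,2,3$ by direct computation, with $F_0 = \id$ on $\mathbb{Z}G = B_0 = K_0$. The common combinatorial ingredients are the telescoping decomposition
\[
g_1^{i_1}\cdots g_n^{i_n} - 1 \;=\; \sum_{s=1}^n g_1^{i_1}\cdots g_{s-1}^{i_{s-1}}\bigl(g_s^{i_s}-1\bigr),
\]
the geometric-series identity $g_s^{i_s}-1 = (g_s-1)\sum_{\alpha_s=0}^{i_s-1} g_s^{\alpha_s}$, and, for $m\geq 2$, the division identity of Lemma 2.6 that records the carrying when $i_s+j_s$ exceeds $m_s$.

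For $m=1$ the verification is immediate: writing $h=g_1^{i_1}\cdots g_n^{i_n}$, the telescoping decomposition together with $d\Phi_s = T_s = g_s-1$ identifies $dF_1[h]$ with $\partial_1[h]=h-1$ term by term.

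For $m=2$ I would expand $F_1\partial_2[h_1,h_2] = F_1\bigl(h_1[h_2] - [h_1h_2] + [h_1]\bigr)$ and $dF_2[h_1,h_2]$ separately, writing $h_1 = g_1^{i_1}\cdots g_n^{i_n}$ and $h_2 = g_1^{j_1}\cdots g_n^{j_n}$. The critical step is the middle term $F_1[h_1h_2]$: because the exponent of $g_s$ in $h_1h_2$ is $(i_s+j_s)'_{m_s}$ rather than $i_s+j_s$, applying Lemma 2.6 produces a principal part together with a correction of the form $-[\tfrac{i_s+j_s}{m_s}]\sum_{\alpha=0}^{m_s-1}g_s^{\alpha}$, i.e., exactly $-[\tfrac{i_s+j_s}{m_s}]N_s$ acting on $\Phi_s$. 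On the right-hand side, $d\Phi_{s,s}=N_s\Phi_s$ produces precisely this correction, while $d\Phi_{s,t}= -T_t\Phi_s + T_s\Phi_t$ for $s<t$ (with signs prescribed by the Koszul rule), combined with two applications of the telescoping identity, matches the cross terms arising from the $h_1$-prefactor in $F_1(h_1[h_2])$.

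For $m=3$ the strategy is the same but substantially more elaborate. I would organize the computation by the four types of generators appearing in $K_3$, namely $\Phi_{r,r,r}$, $\Phi_{r,r,t}$ (with $r<t$), $\Phi_{r,t,t}$ (with $r<t$), and $\Phi_{r,s,t}$ (with $r<s<t$), comparing for each type the coefficient produced by $F_2\partial_3[h_1,h_2,h_3]$ with that produced by $dF_3[h_1,h_2,h_3]$. The $\Phi_{r,r,r}$ and $\Phi_{r,s,t}$ matchings reduce essentially to the one-variable and triple-telescoping arguments already handled; the mixed $\Phi_{r,r,t}$ and $\Phi_{r,t,t}$ coefficients require Lemma 2.6 to produce the carry factors $[\tfrac{j_r+k_r}{m_r}]$ and $[\tfrac{i_t+j_t}{m_t}]$ matching the $N$-differentials on these generators, while the $T$-differentials on $\Phi_{r,s,t}$ and $\Phi_{r,r,t},\Phi_{r,t,t}$ supply the telescoping between the three symbols $[h_1]$, $[h_1h_2]$, $[h_2h_3]$, $[h_1h_2h_3]$, $[h_2]$, $[h_3]$ appearing in $\partial_3$. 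The main obstacle is pure bookkeeping: tracking the alternating Koszul signs $(-1)^{\sum_{l<i}a_l}$ in $d$ alongside the several ways the $h_1$-prefactor in $F_2(h_1[h_2,h_3])$ interacts with the internal telescopings across pairs and triples of indices, so that all correction terms cancel cleanly against the principal parts.
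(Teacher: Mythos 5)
Your plan is correct and follows essentially the same route as the paper: both verify the three squares $dF_1=\partial_1$, $dF_2=F_1\partial_2$, $dF_3=F_2\partial_3$ by direct expansion, using the telescoping decomposition of $g_1^{i_1}\cdots g_n^{i_n}-1$, the geometric-series identity for $T_s$, and Lemma 2.6 to absorb the carries $[\tfrac{i_s+j_s}{m_s}]$ into the $N$-terms coming from $d\Phi_{s,s}$ (and, in degree 3, from $d\Phi_{r,r,t}$, $d\Phi_{r,t,t}$). Only note that in the degree-3 step the coefficient comparison is of course performed on the $K_2$ generators $\Phi_{s,s}$ and $\Phi_{s,t}$ (both sides lie in $K_2$), with the four types of $K_3$ generators merely organizing the terms of $dF_3$, which is exactly how the paper proceeds.
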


\begin{proof} The proof is by direct but very complicated computation. The essence of the proposition lies in figuring out the morphisms $F_{1},F_{2}$ and $F_{3}$ in the first place. We hope that the proof may shed some light on the construction of them. The proof is naturally divided into three parts.

\textbf{Claim 1: $dF_{1}=\partial_{1}$. } Take any generator $[g_{1}^{i_{1}}\cdots g_{n}^{i_{n}}]\in B_{1}$, then
$\partial_{1}([g_{1}^{i_{1}}\cdots g_{n}^{i_{n}}])=(g_{1}^{i_{1}}\cdots g_{n}^{i_{n}}-1)\Phi(0,\ldots,0)$.  And,
 \begin{eqnarray*}
 dF_{1}([g_{1}^{i_{1}}\cdots g_{n}^{i_{n}}])&=&d(\sum_{s=1}^{n}\sum_{\alpha_{s}=0}^{i_{s}-1}g_{1}^{i_{1}}\cdots g_{s-1}^{i_{s-1}}g_{s}^{\alpha_{s}}\Phi_{s})\\
&=&\sum_{s=1}^{n}\sum_{\alpha_{s}=0}^{i_{s}-1}g_{1}^{i_{1}}\cdots g_{s-1}^{i_{s-1}}g_{s}^{\alpha_{s}}(g_{s}-1)\Phi(0,\ldots,0)\\
&=&\sum_{s=1}^{n}g_{1}^{i_{1}}\cdots g_{s-1}^{i_{s-1}}(g_{s}^{i_{s}}-1)\Phi(0,\ldots,0)\\
&=&(g_{1}^{i_{1}}\cdots g_{n}^{i_{n}}-1)\Phi(0,\ldots,0).
\end{eqnarray*}

\textbf{Claim 2: $dF_{2}=F_{1}\partial_{2}$. } For any generator $[g_{1}^{i_{1}}\cdots g_{n}^{i_{n}},g_{1}^{j_{1}}\cdots g_{n}^{j_{n}}]$, we have
  \begin{eqnarray*}&&F_{1}\partial_{2}([g_{1}^{i_{1}}\cdots g_{n}^{i_{n}},g_{1}^{j_{1}}\cdots g_{n}^{j_{n}}])\\
  &=&F_{1}(g_{1}^{i_{1}}\cdots g_{n}^{i_{n}}[g_{1}^{j_{1}}\cdots g_{n}^{j_{n}}]-[g_{1}^{i_{1}+j_{1}}\cdots g_{n}^{i_{n}+j_{n}}]+[g_{1}^{i_{1}}\cdots g_{n}^{i_{n}}])\\
  &=&g_{1}^{i_{1}}\cdots g_{n}^{i_{n}}\sum_{s=1}^{n}\sum_{\alpha_{s}=0}^{j_{s}-1}g_{1}^{j_{1}}\cdots g_{s-1}^{j_{s-1}}g_{s}^{\alpha_{s}}\Phi_{s}\\
  &&-\sum_{s=1}^{n}\sum_{\alpha_{s}=0}^{(i_{s}+j_{s})'-1}g_{1}^{i_{1}+j_{1}}\cdots g_{s-1}^{i_{s-1}+j_{s-1}}g_{s}^{\alpha_{s}}\Phi_{s}\\
  &&+\sum_{s=1}^{n}\sum_{\alpha_{s}=0}^{i_{s}-1}g_{1}^{i_{1}}\cdots g_{s-1}^{i_{s-1}}g_{s}^{\alpha_{s}}\Phi_{s}.
   \end{eqnarray*}
   Fix any $s$, the coefficient of $\Phi_{s}$ is
   \begin{eqnarray}&&g_{1}^{i_{1}}\cdots g_{n}^{i_{n}}\sum_{\alpha_{s}=0}^{j_{s}-1}g_{1}^{j_{1}}\cdots g_{s-1}^{j_{s-1}}g_{s}^{\alpha_{s}} \notag \\
   &&-g_{1}^{i_{1}+j_{1}}\cdots g_{s-1}^{i_{s-1}+j_{s-1}}(\sum_{\alpha_{s}=0}^{i_{s}+j_{s}-1}g_{s}^{\alpha_{s}}-[\frac{i_{s}+j_{s}}{m_{s}}]N_{s})\\\notag
   &&+\sum_{\alpha_{s}=0}^{i_{s}-1}g_{1}^{i_{1}}\cdots g_{s-1}^{i_{s-1}}g_{s}^{\alpha_{s}}.\end{eqnarray}

Now consider $dF_{2}.$ We have
   \begin{eqnarray*} &&dF_{2}([g_{1}^{i_{1}}\cdots g_{n}^{i_{n}},g_{1}^{j_{1}}\cdots g_{n}^{j_{n}}]) \\ &=&d(\sum_{s=1}^{n}g_{1}^{i_{1}+j_{1}} \cdots g_{s-1}^{i_{s-1}+j_{s-1}}[\frac{i_{s}+j_{s}}{m_{s}}]\Phi_{s,s}) \\ &&-d(\sum_{1\leq s<t\leq n}\sum_{\alpha_{s}=0}^{j_{s}-1}\sum_{\beta_{t}=0}^{i_{t}-1}g_{1}^{i_{1}}\cdots g_{t-1}^{i_{t-1}}g_{1}^{j_{1}}\cdots g_{s-1}^{j_{s-1}}g_{s}^{\alpha_{s}}g_{t}^{\beta_{t}}\Phi_{s,t}).
   \end{eqnarray*}
In this expression, the coefficient of $\Phi_{s}$ is
   \begin{eqnarray*}&&g_{1}^{i_{1}+j_{1}}\cdots g_{s-1}^{i_{s-1}+j_{s-1}}[\frac{i_{s}+j_{s}}{m_{s}}]N_{s}\\
   &&-\sum_{1\leq t<s}\sum_{\beta_{s}=0}^{i_{s}-1}g_{1}^{i_{1}}\cdots g_{s-1}^{i_{s-1}}g_{1}^{j_{1}}\cdots g_{t-1}^{j_{t-1}}(g_{t}^{j_{t}}-1)g_{s}^{\beta_{s}}\\
   &&+\sum_{s<t\leq n}\sum_{\alpha_{s}=0}^{j_{s}-1}g_{1}^{i_{1}}\cdots g_{t-1}^{i_{t-1}}g_{1}^{j_{1}}\cdots g_{s-1}^{j_{s-1}}g_{s}^{\alpha_{s}}(g_{t}^{i_{t}}-1)\\
   &=&g_{1}^{i_{1}+j_{1}}\cdots g_{s-1}^{i_{s-1}+j_{s-1}}[\frac{i_{s}+j_{s}}{m_{s}}]N_{s}\\&&
   -\sum_{\beta_{s}=0}^{i_{s}-1}g_{1}^{i_{1}}\cdots g_{s-1}^{i_{s-1}}(g_{1}^{j_{1}}\cdots g_{s-1}^{j_{s-1}}-1)g_{s}^{\beta_{s}}\\
   &&+\sum_{\alpha_{s}=0}^{j_{s}-1}(g_{1}^{i_{1}}\cdots g_{n}^{i_{n}}-g_{1}^{i_{1}}\cdots g_{s}^{i_{s}})g_{1}^{j_{1}}\cdots g_{s-1}^{j_{s-1}}g_{s}^{\alpha_{s}}\\
   &=& g_{1}^{i_{1}+j_{1}}\cdots g_{s-1}^{i_{s-1}+j_{s-1}}[\frac{i_{s}+j_{s}}{m_{s}}]N_{s}\\&&
   -\sum_{\beta_{s}=0}^{i_{s}+j_{s}-1}g_{1}^{i_{1}+j_{1}}\cdots g_{s-1}^{i_{s-1}+j_{s-1}}g_{s}^{\beta_{s}}\\
   &&+\sum_{\alpha_{s}=0}^{j_{s}-1}g_{1}^{i_{1}}\cdots g_{n}^{i_{n}}g_{1}^{j_{1}}\cdots g_{s-1}^{j_{s-1}}g_{s}^{\alpha_{s}}+\sum_{\beta_{s}=0}^{i_{s}-1}g_{1}^{i_{1}}\cdots g_{s-1}^{i_{s-1}}g_{s}^{\beta_{s}},\end{eqnarray*}
   which is clearly identical with (2.5). So we have $dF_{2}=F_{1}\partial_{2}.$

   \textbf{Claim 3: $dF_{3}=F_{2}\partial_{3}$. } Similarly, for any generator $[g_{1}^{i_{1}}\cdots g_{n}^{i_{n}},g_{1}^{j_{1}}\cdots g_{n}^{j_{n}}, g_{1}^{k_{1}}\cdots g_{n}^{k_{n}}]$, we have
   \begin{eqnarray*}&&F_{2}\partial_{3}([g_{1}^{i_{1}}\cdots g_{n}^{i_{n}},g_{1}^{j_{1}}\cdots g_{n}^{j_{n}},g_{1}^{k_{1}}\cdots g_{n}^{k_{n}}])\\
   &=&F_{2}(g_{1}^{i_{1}}\cdots g_{n}^{i_{n}}[g_{1}^{j_{1}}\cdots g_{n}^{j_{n}},g_{1}^{k_{1}}\cdots g_{n}^{k_{n}}]-[g_{1}^{i_{1}+j_{1}}\cdots g_{n}^{i_{n}+j_{n}},g_{1}^{k_{1}}\cdots g_{n}^{k_{n}}])\\
   &&+F_{2}([g_{1}^{i_{1}}\cdots g_{n}^{i_{n}},g_{1}^{j_{1}+k_{1}}\cdots g_{n}^{j_{n}+k_{n}}]-[g_{1}^{i_{1}}\cdots g_{n}^{i_{n}},g_{1}^{j_{1}}\cdots g_{n}^{j_{n}}])\\
   &=&g_{1}^{i_{1}}\cdots g_{n}^{i_{n}}\sum_{s=1}^{n}g_{1}^{j_{1}+k_{1}}\cdots g_{s-1}^{j_{s-1}+k_{s-1}}[\frac{j_{s}+k_{s}}{m_{s}}]\Phi_{s,s}\\
   &&-g_{1}^{i_{1}}\cdots g_{n}^{i_{n}}\sum_{1\leq s<t\leq n}\sum_{\alpha_{s}=0}^{k_{s}-1}\sum_{\beta_{t}=0}^{j_{t}-1}g_{1}^{j_{1}}\cdots g_{t-1}^{j_{t-1}}g_{1}^{k_{1}}\cdots g_{s-1}^{k_{s-1}}g_{s}^{\alpha_{s}}g_{t}^{\beta_{t}}\Phi_{s,t}\\
   &&-\sum_{s=1}^{n}g_{1}^{i_{1}+j_{1}+k_{1}}\cdots g_{s-1}^{i_{s-1}+j_{s-1}+k_{s-1}}[\frac{(i_{s}+j_{s})'+k_{s}}{m_{s}}]\Phi_{s,s}\\
   &&+\sum_{1\leq s<t\leq n}\sum_{\alpha_{s}=0}^{k_{s}-1}\sum_{\beta_{t}=0}^{(i_{t}+j_{t})'-1}g_{1}^{i_{1}+j_{1}}\cdots g_{t-1}^{i_{t-1}+j_{t-1}}g_{1}^{k_{1}}\cdots g_{s-1}^{k_{s-1}}g_{s}^{\alpha_{s}}g_{t}^{\beta_{t}}\Phi_{s,t}\\
   &&+\sum_{s=1}^{n}g_{1}^{i_{1}+j_{1}+k_{1}}\cdots g_{s-1}^{i_{s-1}+j_{s-1}+k_{s-1}}[\frac{i_{s}+(j_{s}+k_{s})'}{m_{s}}]\Phi_{s,s}\\
   &&-\sum_{1\leq s<t\leq n}\sum_{\alpha_{s}=0}^{(j_{s}+k_{s})'-1}\sum_{\beta_{t}=0}^{i_{t}-1}g_{1}^{i_{1}}\cdots g_{t-1}^{i_{t-1}}g_{1}^{j_{1}+k_{1}}\cdots g_{s-1}^{j_{s-1}+k_{s-1}}g_{s}^{\alpha_{s}}g_{t}^{\beta_{t}}\Phi_{s,t}\\
   &&-\sum_{s=1}^{n}g_{1}^{i_{1}+j_{1}}\cdots g_{s-1}^{i_{s-1}+j_{s-1}}[\frac{i_{s}+j_{s}}{m_{s}}]\Phi_{s,s}\\
   &&+\sum_{1\leq s<t\leq n}\sum_{\alpha_{s}=0}^{j_{s}-1}\sum_{\beta_{t}=0}^{i_{t}-1}g_{1}^{i_{1}}\cdots g_{t-1}^{i_{t-1}}g_{1}^{j_{1}}\cdots g_{s-1}^{j_{s-1}}g_{s}^{\alpha_{s}}g_{t}^{\beta_{t}}\Phi_{s,t}.
   \end{eqnarray*}
Note that in $(i_s+j_s)'$ we drop the subscript $m_s.$ In the previous expression, for any $1\leq s\leq n$, the coefficient of $\Phi_{s,s}$ is
    \begin{eqnarray}&&g_{1}^{i_{1}}\cdots g_{n}^{i_{n}}g_{1}^{j_{1}+k_{1}}\cdots g_{s-1}^{j_{s-1}+k_{s-1}}[\frac{j_{s}+k_{s}}{m_{s}}] \notag \\
    &&+g_{1}^{i_{1}+j_{1}+k_{1}}\cdots g_{s-1}^{i_{s-1}+j_{s-1}+k_{s-1}}([\frac{i_{s}+j_{s}}{m_{s}}]-[\frac{j_{s}+k_{s}}{m_{s}}])\\\notag
    &&-g_{1}^{i_{1}+j_{1}}\cdots g_{s-1}^{i_{s-1}+j_{s-1}}[\frac{i_{s}+j_{s}}{m_{s}}],\end{eqnarray} where Lemma 2.6 is applied.
    For any $1\leq s< t\leq n$, the coefficient of $\Phi_{s,t}$ is
     \begin{eqnarray}&&-g_{1}^{i_{1}}\cdots g_{n}^{i_{n}}\sum_{\alpha_{s}=0}^{k_{s}-1}\sum_{\beta_{t}=0}^{j_{t}-1}g_{1}^{j_{1}}\cdots g_{t-1}^{j_{t-1}}g_{1}^{k_{1}}\cdots g_{s-1}^{k_{s-1}}g_{s}^{\alpha_{s}}g_{t}^{\beta_{t}}\notag \\
    &&+\sum_{\alpha_{s}=0}^{k_{s}-1}g_{1}^{i_{1}+j_{1}}\cdots g_{t-1}^{i_{t-1}+j_{t-1}}g_{1}^{k_{1}}\cdots g_{s-1}^{k_{s-1}}g_{s}^{\alpha_{s}}(\sum_{\beta_{t}=0}^{i_{t}+j_{t}-1}g_{t}^{\beta_{t}}
    -[\frac{i_{t}+j_{t}}{m_{t}}]N_{t})\\\notag
    &&-\sum_{\beta_{t}=0}^{i_{t}-1}g_{1}^{i_{1}}\cdots g_{t-1}^{i_{t-1}}g_{t}^{\beta_{t}}g_{1}^{j_{1}+k_{1}}\cdots g_{s-1}^{j_{s-1}+k_{s-1}}(\sum_{\alpha_{s}=0}^{j_{s}+k_{s}-1}g_{s}^{\alpha_{s}}-
    [\frac{j_{s}+k_{s}}{m_{s}}]N_{s})\\\notag
    &&+\sum_{\alpha_{s}=0}^{j_{s}-1}\sum_{\beta_{t}=0}^{i_{t}-1}g_{1}^{i_{1}}\cdots g_{t-1}^{i_{t-1}}g_{1}^{j_{1}}\cdots g_{s-1}^{j_{s-1}}g_{s}^{\alpha_{s}}g_{t}^{\beta_{t}}.\end{eqnarray}

For $dF_{3},$ we have
      \begin{eqnarray*}&&dF_{3}([g_{1}^{i_{1}}\cdots g_{n}^{i_{n}},g_{1}^{j_{1}}\cdots g_{n}^{j_{n}},g_{1}^{k_{1}}\cdots g_{n}^{k_{n}}])\\
      &=& d(\sum_{r=1}^{n}[\frac{j_{r}+k_{r}}{m_{r}}]g_{1}^{j_{1}+k_{1}}\cdots g_{r-1}^{j_{r-1}+k_{r-1}}\sum_{\beta_{r}=0}^{i_{r}-1}g_{1}^{i_{1}}\cdots g_{r-1}^{i_{r-1}}g_{r}^{\beta_{r}}\Phi_{r,r,r})+\\
      &&d(\sum_{1\leq r<t\leq n}[\frac{j_{r}+k_{r}}{m_{r}}]g_{1}^{j_{1}+k_{1}}\cdots g_{r-1}^{j_{r-1}+k_{r-1}}
      \sum_{\beta_{t}=0}^{i_{t}-1}g_{1}^{i_{1}}\cdots g_{t-1}^{i_{t-1}}g_{t}^{\beta_{t}}\Phi_{r,r,t})+\\
      &&d(\sum_{1\leq r<t\leq n}[\frac{i_{t}+j_{t}}{m_{t}}]g_{1}^{i_{1}+j_{1}}\cdots g_{t-1}^{i_{t-1}+j_{t-1}}
      \sum_{\gamma_{r}=0}^{k_{r}-1}g_{1}^{k_{1}}\cdots g_{r-1}^{k_{r-1}}g_{r}^{\gamma_{r}}\Phi_{r,t,t})-\\
      &&d(\sum_{1\leq r<s<t\leq n}\sum_{\beta_{t}=0}^{i_{t}-1}g_{1}^{i_{1}}\cdots g_{t-1}^{i_{t-1}}g_{t}^{\beta_{t}}
      \sum_{\alpha_{s}=0}^{j_{s}-1}g_{1}^{j_{1}}\cdots g_{s-1}^{j_{s-1}}g_{s}^{\alpha_{s}}
     \sum_{\gamma_{r}=0}^{k_{r}-1}g_{1}^{k_{1}}\cdots g_{r-1}^{k_{r-1}}g_{r}^{\gamma_{r}}\Phi_{r,s,t}).\\
     \end{eqnarray*}
Note that the coefficient of $\Phi_{s,s}$ is
      \begin{eqnarray*}&&[\frac{j_{s}+k_{s}}{m_{s}}]g_{1}^{j_{1}+k_{1}}\cdots g_{s-1}^{j_{s-1}+k_{s-1}}g_{1}^{i_{1}}\cdots g_{s-1}^{i_{s-1}}(g_{s}^{i_{s}}-1)\\
      &&+\sum_{ s<t\leq n}[\frac{j_{s}+k_{s}}{m_{s}}]g_{1}^{j_{1}+k_{1}}\cdots g_{r-1}^{j_{s-1}+k_{s-1}}
      g_{1}^{i_{1}}\cdots g_{t-1}^{i_{t-1}}(g_{t}^{i_{t}}-1)\\
      &&+\sum_{1\leq r<s}[\frac{i_{s}+j_{s}}{m_{s}}]g_{1}^{i_{1}+j_{1}}\cdots g_{s-1}^{i_{s-1}+j_{s-1}}
      g_{1}^{k_{1}}\cdots g_{r-1}^{k_{r-1}}(g_{r}^{k_{r}}-1)\\
      &=&[\frac{j_{s}+k_{s}}{m_{s}}]g_{1}^{j_{1}+k_{1}}\cdots g_{s-1}^{j_{s-1}+k_{s-1}}g_{1}^{i_{1}}\cdots g_{s-1}^{i_{s-1}}(g_{s}^{i_{s}}-1)\\
      &&+[\frac{j_{s}+k_{s}}{m_{s}}]g_{1}^{j_{1}+k_{1}}\cdots g_{r-1}^{j_{s-1}+k_{s-1}}
      (g_{1}^{i_{1}}\cdots g_{n}^{i_{n}}-g_{1}^{i_{1}}\cdots g_{s}^{i_{s}})\\
      &&+[\frac{i_{s}+j_{s}}{m_{s}}]g_{1}^{i_{1}+j_{1}}\cdots g_{s-1}^{i_{s-1}+j_{s-1}}
      (g_{1}^{k_{1}}\cdots g_{s-1}^{k_{s-1}}-1),
      \end{eqnarray*} which clearly is equal to (2.6).

Finally we consider the coefficient of $\Phi_{s,t}$ for $1\leq s<t\leq n$, which is
    \begin{eqnarray*}
     &&[\frac{j_{s}+k_{s}}{m_{s}}]N_{s}g_{1}^{j_{1}+k_{1}}\cdots g_{s-1}^{j_{s-1}+k_{s-1}}
      \sum_{\beta_{t}=0}^{i_{t}-1}g_{1}^{i_{1}}\cdots g_{t-1}^{i_{t-1}}g_{t}^{\beta_{t}}\\
      &&+[\frac{i_{t}+j_{t}}{m_{t}}]N_{t}g_{1}^{i_{1}+j_{1}}\cdots g_{t-1}^{i_{t-1}+j_{t-1}}
      \sum_{\gamma_{s}=0}^{k_{s}-1}g_{1}^{k_{1}}\cdots g_{s-1}^{k_{s-1}}g_{s}^{\gamma_{s}}\\
      &&-\sum_{1\leq r<s<t}\sum_{\beta_{t}=0}^{i_{t}-1}g_{1}^{i_{1}}\cdots g_{t-1}^{i_{t-1}}g_{t}^{\beta_{t}}
      \sum_{\alpha_{s}=0}^{j_{s}-1}g_{1}^{j_{1}}\cdots g_{s-1}^{j_{s-1}}g_{s}^{\alpha_{s}}
     g_{1}^{k_{1}}\cdots g_{r-1}^{k_{r-1}}(g_{r}^{k_{r}}-1)\\
     &&+\sum_{ s<r<t}\sum_{\beta_{t}=0}^{i_{t}-1}g_{1}^{i_{1}}\cdots g_{t-1}^{i_{t-1}}g_{t}^{\beta_{t}}
     g_{1}^{j_{1}}\cdots g_{r-1}^{j_{r-1}}(g_{r}^{j_{r}}-1)
     \sum_{\gamma_{s}=0}^{k_{s}-1}g_{1}^{k_{1}}\cdots g_{s-1}^{k_{s-1}}g_{s}^{\gamma_{s}}\\
     &&-\sum_{s<t<r}g_{1}^{i_{1}}\cdots g_{r-1}^{i_{r-1}}(g_{r}^{i_{r}}-1)
      \sum_{\alpha_{t}=0}^{j_{t}-1}g_{1}^{j_{1}}\cdots g_{t-1}^{j_{t-1}}g_{t}^{\alpha_{t}}
      \sum_{\gamma_{s}=0}^{k_{s}-1}g_{1}^{k_{1}}\cdots g_{s-1}^{k_{s-1}}g_{s}^{\gamma_{s}}\\
      &=&[\frac{j_{s}+k_{s}}{m_{s}}]N_{s}g_{1}^{j_{1}+k_{1}}\cdots g_{s-1}^{j_{s-1}+k_{s-1}}
      \sum_{\beta_{t}=0}^{i_{t}-1}g_{1}^{i_{1}}\cdots g_{t-1}^{i_{t-1}}g_{t}^{\beta_{t}}\\
      &&+[\frac{i_{t}+j_{t}}{m_{t}}]N_{t}g_{1}^{i_{1}+j_{1}}\cdots g_{t-1}^{i_{t-1}+j_{t-1}}
      \sum_{\gamma_{s}=0}^{k_{s}-1}g_{1}^{k_{1}}\cdots g_{s-1}^{k_{s-1}}g_{s}^{\gamma_{s}}\\
      &&-\sum_{\beta_{t}=0}^{i_{t}-1}g_{1}^{i_{1}}\cdots g_{t-1}^{i_{t-1}}g_{t}^{\beta_{t}}
      \sum_{\alpha_{s}=0}^{j_{s}-1}g_{1}^{j_{1}}\cdots g_{s-1}^{j_{s-1}}g_{s}^{\alpha_{s}}
     (g_{1}^{k_{1}}\cdots g_{s-1}^{k_{s-1}}-1)\\
     &&+\sum_{\beta_{t}=0}^{i_{t}-1}g_{1}^{i_{1}}\cdots g_{t-1}^{i_{t-1}}g_{t}^{\beta_{t}}
     (g_{1}^{j_{1}}\cdots g_{t-1}^{j_{t-1}}-g_{1}^{j_{1}}\cdots g_{s}^{j_{s}})
     \sum_{\gamma_{s}=0}^{k_{s}-1}g_{1}^{k_{1}}\cdots g_{s-1}^{k_{s-1}}g_{s}^{\gamma_{s}}\\
     &&-(g_{1}^{i_{1}}\cdots g_{n}^{i_{n}}-g_{1}^{i_{1}}\cdots g_{t}^{i_{t}})
      \sum_{\alpha_{t}=0}^{j_{t}-1}g_{1}^{j_{1}}\cdots g_{t-1}^{j_{t-1}}g_{t}^{\alpha_{t}}
      \sum_{\gamma_{s}=0}^{k_{s}-1}g_{1}^{k_{1}}\cdots g_{s-1}^{k_{s-1}}g_{s}^{\gamma_{s}}.
    \end{eqnarray*} It is not hard to see that this is equal to (2.7). Therefore, $dF_{3}=F_{2}\partial_{3}.$

The proof is completed.
\end{proof}

\section{Monoidal structures and normalized 3-cocycles}
\subsection{Monoidal structures}
Recall that the category $\Vec_G$ of finite-dimensional $G$-graded vector spaces has simple objects $\{V_g|g \in G\}$ where $(V_g)_h=\delta_{g,h}\k, \ \forall h \in G.$ The tensor product is given by $V_g \otimes V_h=V_{gh},$ and $V_1$ ($1$ is the identity of $G$) is the unit object. Without loss of generality we may assume that the left and right unit constraints are identities. If $a$ is an associativity constraint on $\Vec_G,$ then it is given by $a_{V_f,V_g,V_h}=\o(f,g,h)\id,$ where $\o:G \times G \times G \rightarrow \k^*$ is a function. The pentagon axiom and the triangle axiom give
\begin{gather*}
\o(ef,g,h)\o(e,f,gh)=\o(e,f,g)\o(e,fg,h)\o(f,g,h), \\ \o(f,1,g)=1,
\end{gather*}
which exactly say that $\o$ is a normalized 3-cocycle on $G.$ Note that cohomologous cocycles define equivalent monoidal structures, therefore the classification of monoidal structures on $\Vec_G$ is equivalent to determining a complete set of representatives of normalized 3-cocycles on $G.$

\subsection{Normalized 3-cocycles} Now we are able to accomplish the main task with a help of the results obtained in Section 2. Define $A$ to be the set of all sequences like
\begin{equation}(a_{1},\ldots,a_{l},\ldots,a_{n},a_{12},\ldots,a_{ij},\ldots,a_{n-1,n},a_{123},
\ldots,a_{rst},\ldots,a_{n-2,n-1,n})\end{equation}
such that $ 0\leq a_{l}<m_{l}, \ 0\leq a_{ij}<(m_{i},m_{j}), \ 0\leq a_{rst}<(m_{r},m_{s},m_{t})$ for $1\leq l\leq n, \ 1\leq i<j\leq n, \ 1\leq r<s<t\leq n$ where $a_{ij}$ and $a_{rst}$ are ordered by the lexicographic order. In the following, the sequence (3.1) is denoted by $\underline{\mathbf{a}}$ for short.

For any $\underline{\mathbf{a}}\in A$, define a $\mathbb{Z}G$-module morphism:
\begin{eqnarray}
&& \o_{\underline{\mathbf{a}}}:\;B_{3}\To \k^{\ast} \notag \\
&&[g_{1}^{i_{1}}\cdots g_{n}^{i_{n}},g_{1}^{j_{1}}\cdots g_{n}^{j_{n}},g_{1}^{k_{1}}\cdots g_{n}^{k_{n}}] \mapsto \\ &&\prod_{l=1}^{n}\zeta_{l}^{a_{l}i_{l}[\frac{j_{l}+k_{l}}{m_{l}}]}
\prod_{1\leq s<t\leq n}\zeta_{m_{t}}^{a_{st}i_{t}[\frac{j_{s}+k_{s}}{m_{s}}]}
\prod_{1\leq r<s<t\leq n}\zeta_{(m_{r},m_{s},m_{t})}^{-a_{rst}k_{r}j_{s}i_{t}}. \notag
\end{eqnarray}

\begin{proposition}  Suppose that $\k$ is an algebraically closed field of characteristic zero and $G=\mathbb{Z}_{m_{1}}\times\cdots \times\mathbb{Z}_{m_{n}}.$ Then $\{\o_{\underline{\mathbf{a}}}|\underline{\mathbf{a}}\in A\}$ is a complete set of representatives of normalized $3$-cocycles on $G.$
\end{proposition}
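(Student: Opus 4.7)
The plan is to transport the explicit set of representatives for $\H^3(K_\bullet^\ast)$ provided by Corollary 2.5 to $\H^3(G,\k^\ast)$ along the chain map $F_\bullet$ of Proposition 2.7. Both $B_\bullet$ and $K_\bullet$ are free $\Z G$-resolutions of the trivial module $\Z$, and $F_\bullet$ lifts the identity on $\Z$. By the comparison theorem in homological algebra, $F_\bullet$ is then a chain-homotopy equivalence, so the induced cochain map $F^\ast$ is a quasi-isomorphism. In particular $F_3^\ast$ sends a complete set of cohomology representatives of $\H^3(K_\bullet^\ast)$ to one of $\H^3(G,\k^\ast)$.

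Concretely, for each $\underline{\mathbf{a}}\in A$ I take the cocycle $f_{\underline{\mathbf{a}}}\in\Hom_{\Z G}(K_3,\k^\ast)$ furnished by Corollary 2.5, determined by $f_{\underline{\mathbf{a}}}(\Phi_{l,l,l})=\zeta_{m_l}^{a_l}$, $f_{\underline{\mathbf{a}}}(\Phi_{i,i,j})=\zeta_{m_j}^{a_{ij}}$, $f_{\underline{\mathbf{a}}}(\Phi_{i,j,j})=1$ and $f_{\underline{\mathbf{a}}}(\Phi_{r,s,t})=\zeta_{(m_r,m_s,m_t)}^{a_{rst}}$, and plan to verify by direct substitution that $\o_{\underline{\mathbf{a}}}=F_3^\ast f_{\underline{\mathbf{a}}}$; once this identity is secured, the proposition follows at once from the previous paragraph.

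The substitution is essentially mechanical. Evaluating $f_{\underline{\mathbf{a}}}\circ F_3$ on a generator $[g_1^{i_1}\cdots g_n^{i_n},\,g_1^{j_1}\cdots g_n^{j_n},\,g_1^{k_1}\cdots g_n^{k_n}]$ via the four-sum formula in Proposition 2.7, and using that $\k^\ast$ is a trivial $G$-module so every group-element coefficient in $F_3$ acts as $1$, only the scalar multiplicities survive. The $\Phi_{r,r,r}$-block then produces $\prod_l \zeta_{m_l}^{a_l i_l [\frac{j_l+k_l}{m_l}]}$, the $\Phi_{r,r,t}$-block produces $\prod_{s<t}\zeta_{m_t}^{a_{st}i_t[\frac{j_s+k_s}{m_s}]}$, the $\Phi_{r,t,t}$-block drops out because $f_{\underline{\mathbf{a}}}(\Phi_{r,t,t})=1$, and the $\Phi_{r,s,t}$-block yields $\prod_{r<s<t}\zeta_{(m_r,m_s,m_t)}^{-a_{rst}k_r j_s i_t}$; their product is exactly the right-hand side of (3.2). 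The heavy lifting, the construction of $F_\bullet$, was already absorbed into Proposition 2.7, so the only real obstacle remaining here is careful bookkeeping. Normalization of each $\o_{\underline{\mathbf{a}}}$ is automatic, both because the cochains live on the normalized bar resolution and because formula (3.2) manifestly has a vanishing exponent whenever any of its three group arguments is the identity.
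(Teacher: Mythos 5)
Your proposal is correct and follows essentially the same route as the paper, whose proof is just the one-line observation that the statement is ``a direct consequence of Corollary 2.5 and the definition of $F_3$'': you pull back the representative cocycles $f_{\underline{\mathbf{a}}}$ on $K_3$ along $F_3$, check that the augmentations of the four blocks reproduce formula (3.2), and use that any chain map between free resolutions lifting $\id_{\Z}$ induces an isomorphism on cohomology. Your write-up merely makes explicit the comparison-theorem step and the block-by-block substitution that the paper leaves implicit.
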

\begin{proof} This is a direct consequence of Corollary 2.5 and the definition of the map $F_{3}$ given in Proposition 2.7.
\end{proof}

\section{Braided structures on $\Vec_G^\o$}
\subsection{Braided structures} Now we consider the braided structures on a linear Gr-category $\Vec_G^\o.$ Recall that a braiding in $\Vec_G^\o$ is a commutativity constraint $c: \otimes \rightarrow \otimes^{\op}$ satisfying the hexagon axiom. Note that $c$ is given by $c_{V_x,V_y}=\R(x,y)\id,$ where $\R: G \times G \rightarrow \k^*$ is a function, and the hexagon axiom of $c$ says that
\begin{gather}
\R(xy, z)=\o(z , x , y)\R(x , z)\o^{-1}(x , z , y) \R(y ,z)\o(x , y , z),\\
\R(x , yz)=\o^{-1}(y , z , x)\R(x , z) \o(y , x , z) \R(x, y)\o^{-1}(x , y , z),
\end{gather} for all $x,y.z \in G.$
In other words, $\R$ is a quasi-bicharacter of $G$ with respect to $\o.$ Therefore, the classification of braidings in $\Vec_G^\o$ is equivalent to determining all the quasi-bicharacters of $G$ with respect to $\o.$

\subsection{Quasi-bicharacters} By Proposition 3.1, one may assume that $\o=\o_{\underline{\mathbf{a}}}$ for some $\underline{\mathbf{a}}\in A.$ Clearly, any quasi-bicharacter $\R$ is uniquely determined by the following values:
$$r_{ij}:=\R(g_{i},g_{j}),\;\;\;\;\;\;\;\;\textrm{for all}\;\;1\leq i, \ j\leq n.$$
For the convenience of the computation, we rewrite equations (4.1) and (4.2) as
\begin{gather}
\R(xy, z)=\R(x , z)\R(y ,z)\frac{\o(z,x,y)\o(x,y,z)}{\o(x,z,y)},\\
\R(x , yz)=\R(x , y)\R(x , z)\frac{\o(y,x,z)}{\o(y,z,x)\o(x,y,z)}.
\end{gather}

\begin{proposition} Let $\o=\o_{\underline{\mathbf{a}}}$ for some $\underline{\mathbf{a}}\in A$  and $r_{ij}\in \k^{\ast}$ for
$1\leq i, \ j\leq n$. Then there is a quasi-bicharacter $\R$ with respect to $\o_{\underline{\mathbf{a}}}$ satisfying $\R(g_{i},g_{j})=r_{ij}$
if and only if the following equations are satisfied:
\begin{gather*}r_{ii}^{m_{i}}=\zeta_{m_{i}}^{a_{i}}=\zeta_{m_{i}}^{-a_{i}},\;\;\;\emph{for}\;\;1\leq i\leq n,\\
r_{ij}^{m_{i}}=\zeta_{m_{j}}^{-a_{ij}},\;\;\;\;\;r_{ij}^{m_{j}}=1,\;\;\;\emph{for}\;\;1\leq i<j\leq n,\nonumber \\
r_{ij}^{m_{j}}=\zeta_{m_{j}}^{a_{ij}},\;\;\;\;\;r_{ij}^{m_{i}}=1,\;\;\;\emph{for}\;\;n\geq i>j\geq 1,\nonumber\\
a_{rst}=0,\;\;\;\;\emph{for}\;\;1\leq r<s<t\leq n\nonumber.
\end{gather*}
\end{proposition}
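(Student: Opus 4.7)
The plan is to exploit equations (4.3) and (4.4) systematically on products of generators, together with the explicit formula (3.2) for $\o_{\underline{\mathbf{a}}}$, in order to extract necessary and sufficient conditions on the $r_{ij}$. The key preliminary observation is that by iterating (4.3) in the first argument and (4.4) in the second, the value of $\R$ on an arbitrary pair $(g_1^{i_1}\cdots g_n^{i_n},\, g_1^{j_1}\cdots g_n^{j_n})$ is uniquely determined by the $r_{ij}$ together with explicit $\o$-corrections. Thus the existence of $\R$ reduces to (i) self-consistency of the iterative definition (different decompositions of the same element of $G$ must yield the same value) and (ii) verification that the resulting function satisfies (4.1)--(4.2) globally.

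For necessity, I would proceed in two steps. Applying (4.3) $m_i$ times to $\R(g_i^{m_i},g_j)=\R(1,g_j)=1$, each correction factor $\o(g_j,g_i^\ell,g_i)\,\o(g_i^\ell,g_i,g_j)\,\o(g_i^\ell,g_j,g_i)^{-1}$ collapses under (3.2) to a monomial in $\zeta$ depending only on $a_i$ (when $j=i$) or on $a_i$ and $a_{ij}$ (when $j\neq i$), since most contributions in the three products of (3.2) vanish for position reasons. The brackets $[(\ell+1)/m_i]$ for $\ell=0,\ldots,m_i-1$ sum to $1$, producing the claimed $r_{ii}^{m_i}$ and $r_{ij}^{m_i}$ identities. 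Symmetrically, iterating (4.4) on $\R(g_i,g_j^{m_j})=1$ yields the complementary $r_{ii}^{m_i}$ and $r_{ij}^{m_j}$ identities; the two diagonal relations together force $\zeta_{m_i}^{a_i}=\zeta_{m_i}^{-a_i}$.

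Next I would derive $a_{rst}=0$ from the abelianness of $G$. Since $g_sg_t=g_tg_s$, the value $\R(g_sg_t,g_r)$ may be expanded via (4.3) with $(x,y)=(g_s,g_t)$ or with $(x,y)=(g_t,g_s)$, producing six $\o$-factors in total. Inspection of the third product in (3.2) shows that only $\o(g_t,g_s,g_r)$ carries a nonzero $a_{rst}$-contribution, namely $\zeta_{(m_r,m_s,m_t)}^{-a_{rst}}$; in each of the other five factors the lexicographic condition $r'<s'<t'$ in the $a_{r's't'}$-exponent fails on the ambient position triple. Consistency therefore forces $\zeta_{(m_r,m_s,m_t)}^{a_{rst}}=1$, and the normalization $0\leq a_{rst}<(m_r,m_s,m_t)$ gives $a_{rst}=0$.

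For sufficiency, once the listed relations are imposed, I would define $\R$ on all of $G\times G$ via the iterated formula and verify (4.3)--(4.4) by direct computation using (3.2); Lemma 2.6 is the essential tool for reconciling the various bracket expressions $[(s+t)/r]$. The main obstacle is the combinatorial bookkeeping of which triples contribute to each $\o$-factor in the general case. However, because $a_{rst}=0$ has already eliminated the three-index corrections in (3.2), the remaining verification involves only the two-index terms $\zeta_{m_l}^{a_l}$ and $\zeta_{m_t}^{a_{st}}$, which can be handled in parallel with the cocycle computations carried out in Section 2.
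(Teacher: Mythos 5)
Your proposal follows essentially the same route as the paper: necessity by iterating (4.3)/(4.4) on $\R(g_i^{m_i},g_j)=1$ and $\R(g_i,g_j^{m_j})=1$ (only the last step contributing an $\o$-correction) and by comparing the two expansions of $\R(g_sg_t,g_r)=\R(g_tg_s,g_r)$ to force $a_{rst}=0$; sufficiency by an explicit construction verified directly against (4.3)--(4.4). The only cosmetic difference is that the paper defines $\R(g_1^{i_1}\cdots g_n^{i_n},g_1^{j_1}\cdots g_n^{j_n})=\prod_{s,t}r_{st}^{i_sj_t}$ in closed form (so no well-definedness issue for an iterated definition arises), and its verification rests on the relations $r_{ll}^{m_l}=\zeta_{m_l}^{-a_l}$, $r_{st}^{m_s}=\zeta_{m_t}^{-a_{st}}$ ($s<t$), $r_{st}^{m_s}=1$ ($s>t$) together with $i_s+j_s=(i_s+j_s)'+[\frac{i_s+j_s}{m_s}]m_s$, rather than on Lemma 2.6.
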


\begin{proof}  ``$\Rightarrow$".  By the definition of $\o_{\underline{\mathbf{a}}},$ one may observe that $\o_{\underline{\mathbf{a}}}(x,y,z)=\o_{\underline{\mathbf{a}}}(x,z,y)$ for all $x,y,z \in G.$ Thus (4.3) and (4.4) can be reduced to
\begin{gather}
\R(xy, z)=\R(x , z)\R(y ,
z)\o(z,x,y),\\
\R(x , yz)=\R(x , y)\R(x , z)\frac{1}{\o(x,y,z)}.
\end{gather}

For any $1\leq i\leq n$, applying (4.5) and (4.6) iteratively, we have $\R(g_{i},g_{i}^{s})=\R(g_{i},g_{i})^{s}$ and $\R(g_{i}^{s},g_{i})=\R(g_{i},g_{i})^{s}$
for $1\leq s\leq m_{i}-1$. Then
$$1=\R(g_{i},g_{i}^{m_{i}})=\R(g_{i},g_{i})\R(g_{i},g_{i}^{m_{i}-1})
\frac{1}{\o(g_{i},g_{i},g_{i}^{m_{i}-1})}=\R(g_{i},g_{i})^{m_{i}}\frac{1}{\zeta_{m_{i}}^{a_{i}}},$$
$$1=\R(g_{i}^{m_{i}},g_{i})=\R(g_{i}^{m_{i}-1},g_{i})\R(g_{i},g_{i})\o(g_{i},g_{i}^{m_{i}-1},
g_{i})=\R(g_{i},g_{i})^{m_{i}}{\zeta_{m_{i}}^{a_{i}}}.$$
Thus $r_{ii}^{m_{i}}=\zeta_{m_{i}}^{a_{i}}=\zeta_{m_{i}}^{-a_{i}}$.

Assume $i<j.$ Applying (4.5) iteratively, one has $\R(g_{i}^{k},g_{j})=\R(g_{i},g_{j})^{k}$ for $1\leq k\leq m_{i}-1.$ Therefore,
$$1=\R(g_{i}^{m_{i}},g_{j})=\R(g_{i}^{m_{i}-1},g_{j})\R(g_{i},g_{j})\o(g_{j},g_{i}^{m_{i}-1},g_{i})
=\R(g_{i},g_{j})^{m_{i}}{\zeta_{m_{j}}^{a_{ij}}}.$$
This implies that $r_{ij}^{m_{i}}=\zeta_{m_{j}}^{-a_{ij}}$. On the other hand, it is not hard to see that $\o(g_{i},g_{j}^{s},g_{j}^{t}) \equiv 1$ for all $s,t.$ Combining this fact and (4.6), we have $\R(g_{i},g_{j}^{k})=\R(g_{i},g_{j})^{k}$ for $1\leq k\leq m_{j}$. So
$$1=\R(g_{i},g_{j}^{m_{j}})=\R(g_{i},g_{j})^{m_{j}}=r_{ij}^{m_{j}}.$$  Similarly, one has $r_{ji}^{m_{j}}=1$ and $r_{ji}^{m_{i}}=\zeta_{m_{j}}^{a_{ij}}$.

For the case $r<s<t,$ consider $\R(g_{t}g_{s},g_{r})$ and $\R(g_{s}g_{t},g_{r})$ which obviously are equal. By (4.3), we have
\begin{eqnarray*}\R(g_{t}g_{s}, g_{r})&=&\R(g_{t} , g_{r})\R(g_{s}, g_{r})\frac{\o(g_{r},g_{t},g_{s})\o(g_{t},g_{s},g_{r})}{\o(g_{t},g_{r},g_{s})}\\
&=&\R(g_{t} , g_{r})\R(g_{s}, g_{r})\zeta_{(m_{r},m_{s},m_{t})}^{-a_{rst}},\end{eqnarray*}
\begin{eqnarray*}\R(g_{s}g_{t}, g_{r})&=&\R(g_{s} , g_{r})\R(g_{t}, g_{r})\frac{\o(g_{r},g_{s},g_{t})\o(g_{s},g_{t},g_{r})}{\o(g_{s},g_{r},g_{t})}\\
&=&\R(g_{s} , g_{r})\R(g_{t}, g_{r}).\end{eqnarray*}
Therefore, $\zeta_{(m_{r},m_{s},m_{t})}^{-a_{rst}}=1$. By the choice of $\zeta_{(m_{r},m_{s},m_{t})}$
and $a_{rst},$ we arrive at $a_{rst}=0.$

The necessity is proved.

``$\Leftarrow$". Conversely, define a  map $\R: G\times G \To k^{\ast}$  by setting $$\R(g_{1}^{i_{1}}\cdots g_{n}^{i_{n}},g_{1}^{j_{1}}\cdots g_{n}^{j_{n}}):=\prod_{1\leq s, t\leq n} r_{st}^{i_{s}j_{t}}.$$ We verify that $\R$ is indeed a quasi-bicharacter. It is enough to prove that (4.3) and (4.4) hold for $\R.$ We verify only (4.3) as (4.4) can be done in the same way.

Let $x=g_{1}^{i_{1}}\cdots g_{n}^{i_{n}}, \ y=g_{1}^{j_{1}}\cdots g_{n}^{j_{n}}, \ z=g_{1}^{k_{1}}\cdots g_{n}^{k_{n}},$ then
$$\R(g_{1}^{i_{1}}\cdots g_{n}^{i_{n}}\cdot g_{1}^{j_{1}}\cdots g_{n}^{j_{n}},g_{1}^{k_{1}}\cdots g_{n}^{k_{n}})=\prod_{1\leq s, t\leq n} r_{st}^{(i_{s}+j_{s})'k_{t}},$$ where $(i_{s}+j_{s})'$ denotes the remainder of division of $i_{s}+j_{s}$ by $m_{s}.$
Consider the right-hand side of (4.3), namely, $\R(x , z)\R(y ,z)\frac{\o(z,x,y)\o(x,y,z)}{\o(x,z,y)}.$ By direct calculation, one has
$$\frac{\o(z,x,y)\o(x,y,z)}{\o(x,z,y)}=\prod_{l=1}^{n}\zeta_{m_{l}}^{a_{l}k_{l}
[\frac{i_{l}+j_{l}}{k_{l}}]}\prod_{1\leq s<t\leq n}\zeta_{m_{t}}^{a_{st}k_{t}
[\frac{i_{s}+j_{s}}{k_{s}}]}.$$
Therefore,
\begin{eqnarray*}
&&\R(x , z)\R(y ,z)\frac{\o(z,x,y)\o(x,y,z)}{\o(x,z,y)}\\
&=&\prod_{1\leq s, t\leq n} r_{st}^{(i_{s}+j_{s})k_{t}}\prod_{l=1}^{n}\zeta_{m_{l}}^{a_{l}k_{l}
[\frac{i_{l}+j_{l}}{k_{l}}]}\prod_{1\leq s<t\leq n}\zeta_{m_{t}}^{a_{st}k_{t}
[\frac{i_{s}+j_{s}}{k_{s}}]}\\
&=&\prod_{1\leq s, t\leq n} r_{st}^{((i_{s}+j_{s})'+[\frac{i_{s}+j_{s}}{m_{s}}]m_{s})k_{t}}\prod_{l=1}^{n}\zeta_{m_{l}}^{a_{l}k_{l}
[\frac{i_{l}+j_{l}}{k_{l}}]}\prod_{1\leq s<t\leq n}\zeta_{m_{t}}^{a_{st}k_{t}
[\frac{i_{s}+j_{s}}{k_{s}}]}.
\end{eqnarray*}
Note that
\begin{eqnarray*}
&&\prod_{1\leq s, t\leq n} r_{st}^{((i_{s}+j_{s})'+[\frac{i_{s}+j_{s}}{m_{s}}]m_{s})k_{t}}\\
&=&\prod_{1\leq s, t\leq n} r_{st}^{(i_{s}+j_{s})'k_{t}}\prod_{s=t=l=1}^{n} r_{ll}^{[\frac{i_{l}+j_{l}}{m_{l}}]m_{l}k_{l}}\prod_{s<t} r_{st}^{[\frac{i_{s}+j_{s}}{m_{s}}]m_{s}k_{t}}
\prod_{s>t} r_{st}^{[\frac{i_{s}+j_{s}}{m_{s}}]m_{s}k_{t}}\\
&=&\prod_{1\leq s, t\leq n} r_{st}^{(i_{s}+j_{s})'k_{t}}\prod_{s=t=l=1}^{n}r_{ll}^{[\frac{i_{l}+j_{l}}{m_{l}}]m_{l}k_{l}}\prod_{s<t} r_{st}^{[\frac{i_{s}+j_{s}}{m_{s}}]m_{s}k_{t}}\\
&=&\prod_{1\leq s, t\leq n} r_{st}^{(i_{s}+j_{s})'k_{t}}\prod_{l=1}^{n}\zeta_{m_{l}}^{-a_{l}k_{l}
[\frac{i_{l}+j_{l}}{m_{l}}]}\prod_{1\leq s<t\leq n}\zeta_{m_{t}}^{-a_{st}k_{t}
[\frac{i_{s}+j_{s}}{m_{s}}]}.
\end{eqnarray*}
This implies that $$\R(x , z)\R(y ,
z)\frac{\o(z,x,y)\o(x,y,z)}{\o(x,z,y)}=\prod_{1\leq s, t\leq n} r_{st}^{(i_{s}+j_{s})'k_{t}}=\R(xy,z).$$

The sufficiency is proved.
\end{proof}

\end{document}